\newtheorem{theorem}{Theorem}
\newtheorem{lemma}[theorem]{Lemma}
\newtheorem{conjecture}{Conjecture}
\newcommand{\set}[1]{\ensuremath{\left\{#1 \right\}}}
\newcommand{\chis}[1]{\chi_{\textrm{st}}'(#1)}
\title{Star edge coloring of some classes of graphs}
\author
{
	\v{L}udmila Bezegov\'{a}\footnotemark[1],\quad
	Borut Lu\v{z}ar\footnotemark[2],\quad
	Martina Mockov\v{c}iakov\'{a}\footnotemark[1], \\
	Roman Sot\'{a}k\thanks{Institute of Mathematics, Faculty of Science, Pavol Jozef \v Saf\'arik University, Ko\v sice, Slovakia. 
			Supported in part by Science and Technology Assistance Agency under the contract No. APVV-0023-10 (R.~Sot\'{a}k),
			Slovak VEGA Grant No.~1/0652/12 (M.~Mockov\v{c}iakova, R. Sot\'ak), and VVGS UPJ\v{S} No.~59/12-13 (M.~Mockov\v{c}iakova).
			All the authors where supported in part by bilateral project SK-SI-0005-10 between Slovakia and Slovenia.
			E-Mails: \texttt{roman.sotak@upjs.sk, \{ludmila.bezegova,martina.mockovciakova\}@student.upjs.sk}},\quad
	Riste \v{S}krekovski\thanks{Faculty of Information Studies, Novo mesto, Slovenia \& Institute of Mathematics, Physics and Mechanics, Ljubljana, Slovenia.  
			Partially supported by ARRS Program P1-0383. E-Mails: \texttt{\{borut.luzar,skrekovski\}@gmail.com}}
}
\begin{document}
\maketitle

\abstract
{
	\textit{A star edge coloring} of a graph is a proper edge coloring without bichromatic paths and cycles of length four. 
	In this paper we establish tight upper bounds for trees and subcubic outerplanar graphs, and derive an upper bound 
	for outerplanar graphs.
}

\bigskip
{\noindent\small \textbf{Keywords:} Star edge coloring, star chromatic index}

\section{Introduction}

A \textit{star edge coloring} of a graph $G$
is a proper edge coloring where at least three distinct colors are used on the edges of every path and cycle of length four, 
i.e., there is neither bichromatic path nor cycle of length four. The minimum number of colors for which $G$ admits a star edge coloring
is called the \textit{star chromatic index} and it is denoted by $\chis{G}$.

The star edge coloring was initiated in 2008 by Liu and Deng~\cite{LiuDen08}, motivated by the vertex version (see~\cite{AlbChaKieKunRam04,BuCraMonRasWan09,CheRasWan13,Gru73,KieKunTim09,NesOss03}).
Recently, Dvo\v{r}\'{a}k, Mohar and \v{S}\'{a}mal \cite{DvoMohSam13} determined upper and lower bounds for complete graphs. 
\begin{theorem}[Dvo\v{r}\'{a}k, Mohar, \v{S}\'{a}mal]
	\label{thm:complete}
	The star chromatic index of the complete graph $K_n$ satisfies
	$$
		2n (1 + o(1)) \le \chis{K_n} \le n \frac{2^{2\sqrt{2}(1+o(1))\sqrt{\log{n}}}}{(\log{n}^{1/4})}\,.
	$$
	In particular, for every $\epsilon >0$ there exists a constant $c$ such that $\chis{K_n} \le cn^{1+\epsilon}$ for every $n\ge 1$.
\end{theorem}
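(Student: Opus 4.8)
The two inequalities are unrelated and I would establish them separately, but both rest on one elementary structural observation that I would record first. In a star edge coloring every color class is a matching, and since a bichromatic path or cycle on four edges is forbidden, the union $M_a\cup M_b$ of any two color classes contains no cycle at all (a $C_4$ is forbidden outright, and any longer even cycle contains a bichromatic four-edge subpath) and no path on four edges; hence $M_a\cup M_b$ is a disjoint union of paths, each having at most three edges. This is the fact that drives both bounds.

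For the upper bound the plan is an explicit algebraic construction fed into the classical density estimate of Behrend for sets with no three-term arithmetic progression. Fix an odd modulus $m$ and a set $V\subseteq\mathbf{Z}_m$ lying inside an interval of length $m/3$ and containing no three-term arithmetic progression (so that none occurs even modulo $m$); identify $V(K_n)$ with $V$, where $n=|V|$, and color the edge $\{x,y\}$ by the residue $x+y$ modulo $m$. This uses at most $m$ colors, and it is proper because $x+y=x+z$ forces $y=z$. A bichromatic cycle $v_0v_1v_2v_3$ would give $v_0+v_1=v_2+v_3$ and $v_1+v_2=v_3+v_0$, whence $2v_0=2v_2$ and, as $m$ is odd, $v_0=v_2$, which is impossible. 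A bichromatic path $v_0v_1v_2v_3v_4$ would give $v_0+v_1=v_2+v_3$ and $v_1+v_2=v_3+v_4$; subtracting, $v_0-v_2=v_2-v_4$, so $v_0,v_2,v_4$ is a three-term arithmetic progression, which $V$ was chosen to avoid. Thus the coloring is a star edge coloring with at most $m$ colors, and I would finish by invoking Behrend: there is such a progression-free $V$ of size $n$ once $m\le n\,2^{2\sqrt{2}(1+o(1))\sqrt{\log n}}/(\log n)^{1/4}$, so substituting this choice of $m$ into the color count yields the displayed bound on $\chis{K_n}$.

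For the lower bound I would argue locally at a vertex. Fix $v$; its $n-1$ incident edges carry distinct colors $c_1,\dots,c_{n-1}$, with $c_i$ coloring $vx_i$, and these already account for $n-1$ colors, so it remains to force roughly $n$ further colors on the clique $K$ spanned by $x_1,\dots,x_{n-1}$. The leverage is the forbidden bichromatic four-edge path: if $c_j$ occurred at $x_i$ and simultaneously $c_i$ occurred at $x_j$, then stitching the corresponding edges through $x_i,v,x_j$ would create a bichromatic path (or, in the degenerate case, a bichromatic $C_4$) on four edges. Orienting $i\to j$ whenever $c_j$ appears at $x_i$, this says the resulting digraph has no $2$-cycle, which caps the number $t$ of $K$-edges that reuse the palette $\{c_1,\dots,c_{n-1}\}$. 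Since each color outside that palette is a matching of $K$ and so covers at most $(n-1)/2$ of the $\binom{n-1}{2}$ edges, one gets $\chis{K_n}\ge (n-1)+\big(\,(n-2)-\tfrac{2t}{n-1}\,\big)$, which tends to $2n$ provided $t=o(n^2)$.

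The hard part, and the genuine obstacle, is exactly this last quantitative step. The bare no-$2$-cycle condition only gives $t\le\binom{n-1}{2}/2$, and since an edge of $K$ wearing an old color $c_k$ is precisely a second, disjoint edge of the matching $M_{c_k}$ among the neighbors of $v$, controlling $t$ amounts to controlling how large the color classes covering $v$ can simultaneously be; the crude estimates above, even after averaging over $v$ and using that at most one class can exceed $\tfrac{3n}{8}$, only reach a constant in the range $1.25$–$1.5$ rather than $2$. Pinning down the sharp factor $2$ therefore requires extracting more from the $P_5$- and $C_4$-constraints to show that this reuse is genuinely subquadratic; this refined accounting is where the real work of the lower bound lies, whereas the upper bound is essentially a clean translation of Behrend's construction.
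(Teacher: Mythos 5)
A preliminary remark: this theorem is quoted in the paper from Dvo\v{r}\'{a}k, Mohar and \v{S}\'{a}mal \cite{DvoMohSam13} and is not proved here at all, so there is no in-paper proof to compare against; I judge your proposal on its own merits and against the original source.

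Your upper bound argument is correct and is essentially the known one: identify the vertices with a $3$-AP-free set $V$ lying in an interval of length $m/3$ in $\mathbf{Z}_m$, $m$ odd, and color $\set{x,y}$ by $x+y \bmod m$; properness is immediate, a bichromatic $4$-cycle forces $2v_0=2v_2$ (impossible for odd $m$), and a bichromatic $4$-path forces a $3$-term progression, which the interval condition lifts from $\mathbf{Z}_m$ to the integers. One quibble: your invocation of Behrend is stated with the inequality backwards --- Behrend supplies a progression-free set of size $n$ once $m$ is at least roughly $n\,2^{2\sqrt{2}(1+o(1))\sqrt{\log n}}$ (the polylogarithmic factors being absorbed into the $o(1)$), not ``once $m\le\dots$''; the intended conclusion $\chis{K_n}\le m$ for such an $m$ is nonetheless the right one.

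The genuine gap is the lower bound $2n(1+o(1))\le\chis{K_n}$, which is half of the statement and which you explicitly do not prove. Worse, the gap is not merely a matter of ``refined accounting'' of the constraints you set up: the constraints you use are provably too weak. Your argument records only properness at the fixed vertex $v$ and digon-freeness of the reuse digraph $D$ on $N(v)$ (where $i\to j$ iff $c_j$ appears at $x_i$). Even strengthening this with the further through-$v$ constraint that a reused edge $x_ix_j$ forces both non-arcs $i\not\to j$ and $j\not\to i$, one only gets $t\le\binom{n-1}{2}/3$ and hence a bound of order $5n/3$. Moreover, these local constraints are genuinely consistent with $t=\Theta(n^2)$: split $N(v)$ into halves $A$ and $B$, color edges inside $A$ only with colors $\varphi(vb)$, $b\in B$, and use no other old colors; then all arcs of $D$ go from $B$ to $A$, there are no digons, every reused edge joins a non-adjacent pair, and $t\approx n^2/8$. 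So no analysis confined to paths through $v$ can certify $t=o(n^2)$; the factor $2$ requires constraints that do not pass through the fixed vertex (i.e., a global argument about the color classes, as in \cite{DvoMohSam13}). As it stands, your proposal proves the upper bound and only a weaker lower bound of order $3n/2$.
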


They asked what is the true order of magnitude of $\chis{K_n}$, in particular, if $\chis{K_n} = O(n)$.
From Theorem~\ref{thm:complete}, they also derived the following near-linear upper bound in terms of the maximum degree $\Delta$ for general graphs.
\begin{theorem}[Dvo\v{r}\'{a}k, Mohar, \v{S}\'{a}mal]
	Let $G$ be an arbitrary graph of maximum degree $\Delta$. Then
	$$
		\chis{G} \le \chis{K_{\Delta + 1}} \cdot O \bigg( \frac{\log \Delta}{\log \log \Delta} \bigg)^2\,,
	$$
	and therefore $\chis{G} \le \Delta \cdot 2^{O(1)\sqrt{\log \Delta}}$.
\end{theorem}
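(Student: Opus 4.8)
The plan is to transfer a star edge colouring of the complete graph $K_{\Delta+1}$ to $G$ along a vertex colouring, and then repair the few local conflicts that arise. Fix a star edge colouring $\phi$ of $K_{\Delta+1}$ using $\chis{K_{\Delta+1}}$ colours, and identify $V(K_{\Delta+1})$ with $\{1,\dots,\Delta+1\}$. Given any proper vertex colouring $f\colon V(G)\to\{1,\dots,\Delta+1\}$ of $G$, I would set $c_0(uv)=\phi(f(u)f(v))$, which is well defined since $f(u)\ne f(v)$ on every edge. First I would measure how far $c_0$ is from being a star edge colouring. If $f$ were a proper colouring of the square $G^2$, so that $f$ is injective on every closed neighbourhood, then a short case analysis on walks of length four in $K_{\Delta+1}$ shows that $c_0$ is already a star edge colouring, giving $\chis{G}\le\chis{K_{\Delta^2+1}}$; but this is too weak, as it replaces $\Delta$ by $\Delta^2$. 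The point is therefore to use only $\Delta+1$ colours in $f$ and to control the resulting defects cheaply.

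The key tool is a frugal colouring: by the theorem of Hind, Molloy and Reed, $G$ admits a proper colouring $f$ with $\Delta+1$ colours that is $\beta$-frugal for $\beta=O(\log\Delta/\log\log\Delta)$, meaning that every colour occurs at most $\beta$ times in any neighbourhood. The crucial observation is that, since $\phi$ is proper on $K_{\Delta+1}$, each vertex $u$ has at most one incident $K_{\Delta+1}$-edge of any given $\phi$-colour; hence all edges $uv$ with a prescribed value of $c_0(uv)$ lead to neighbours $v$ sharing one $f$-label, and by frugality there are at most $\beta$ of them. Consequently the \emph{defect graph} $D$, whose vertices are the edges of $G$ and in which two edges are joined when they share an endpoint and receive the same $c_0$-colour, has maximum degree at most $2\beta-2=O(\beta)$. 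I would then let $\sigma$ be a proper colouring of the square $D^2$ (a distance-two colouring of $D$), which needs only $O(\beta^2)$ colours greedily since $\Delta(D)=O(\beta)$, and output $c=(c_0,\sigma)$.

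Properness of $c$ is immediate, since two adjacent edges either differ in $c_0$ or are adjacent in $D$ and hence differ in $\sigma$. The heart of the argument is the claim that $c_0$ has no \emph{pure} star defect: whenever $c_0$ is proper along a path or cycle of length four, the corresponding closed walk in $K_{\Delta+1}$ is a genuine $P_4$ or $C_4$, because every shortcut would force two equally coloured incident $K_{\Delta+1}$-edges, contradicting properness of $\phi$; thus $\phi$, being a star colouring, already forbids the bichromatic pattern. It follows that any bichromatic path or cycle of length four under $c$ must be monochromatic in $c_0$, so its four edges lie pairwise within distance two in $D$ and are therefore separated by $\sigma$, a contradiction. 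This yields $\chis{G}\le\chis{K_{\Delta+1}}\cdot O(\beta^2)=\chis{K_{\Delta+1}}\cdot O\bigl(\tfrac{\log\Delta}{\log\log\Delta}\bigr)^2$, and substituting the bound $\chis{K_{\Delta+1}}\le(\Delta+1)\,2^{O(\sqrt{\log\Delta})}$ from Theorem~\ref{thm:complete} — while absorbing the polylogarithmic factor into $2^{O(\sqrt{\log\Delta})}$ — gives $\chis{G}\le\Delta\cdot 2^{O(\sqrt{\log\Delta})}$. The step I expect to be the main obstacle is precisely the no-pure-defect claim: one must verify that every way a bichromatic configuration could appear among properly $c_0$-coloured edges is either ruled out by the properness of $\phi$ or already excluded by $\phi$ being a star colouring of $K_{\Delta+1}$.
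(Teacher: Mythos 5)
This theorem is quoted by the paper from Dvo\v{r}\'{a}k, Mohar and \v{S}\'{a}mal~\cite{DvoMohSam13} as background, with no proof given, so there is no in-paper argument to compare against; judged on its own merits, your proposal is correct and is essentially the argument of the original DMS paper: transfer a star edge colouring of $K_{\Delta+1}$ along an optimally frugal proper $(\Delta+1)$-colouring, observe via properness of $\phi$ plus frugality that the conflict (defect) graph on $E(G)$ has maximum degree $O(\beta)$, and repair with a distance-two colouring costing $O(\beta^2)$ extra values. Two cosmetic points: the $O(\log\Delta/\log\log\Delta)$-frugal $(\Delta+1)$-colouring is due to Molloy and Reed (Hind--Molloy--Reed only give polylogarithmic frugality, which would inflate your factor), and your ``no pure defect'' claim should be stated with the bichromatic equalities $c_0(e_1)=c_0(e_3)$ and $c_0(e_2)=c_0(e_4)$ as hypotheses alongside properness along the path, since ruling out the shortcut $f(v_0)=f(v_3)$ uses that the first and third edges carry equal colours; with that restated, every step of your argument goes through.
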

In addition, Dvo\v{r}ak et al. considered cubic graphs and showed that the star chromatic index lies between $4$ and $7$.
\begin{theorem}[Dvo\v{r}\'{a}k, Mohar, \v{S}\'{a}mal]~
	\begin{itemize}
		\item[$(a)$] If $G$ is a subcubic graph, then $\chis{G} \le 7$.
		\item[$(b)$] If $G$ is a simple cubic graph, then $\chis{G} \ge 4$, and the equality holds
			if and only if $G$ covers the graph of the $3$-cube.
	\end{itemize}
\end{theorem}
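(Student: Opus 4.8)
The plan is to treat the two parts separately, beginning with the upper bound in $(a)$, which I would attack by a minimal-counterexample and reducible-configuration argument. Suppose $G$ is a subcubic graph with the fewest edges satisfying $\chis{G} > 7$. First I would clear out the easy local structures: a vertex of degree at most $1$, or a chain of degree-$2$ vertices, can be removed (or suppressed), a star $7$-coloring obtained by minimality, and the missing edges then re-inserted. The substance is the extension step: when re-inserting an edge $e = uv$ we must avoid (i) the colors of the at most four edges adjacent to $e$, and (ii) every color that would complete a bichromatic path or cycle of length four through $e$. A direct count of the forbidden colors in (ii) can reach eight when both endpoints have degree three, so naive greediness fails; this is exactly the main obstacle. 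I would overcome it by exploiting that many of these forbidden colors coincide or are already excluded by properness, and, where the count is still too large, by locally recoloring one of the neighbouring edges to release a color, arguing that such a recoloring never creates a new bichromatic length-four configuration. This bookkeeping is the crux of the whole theorem.

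For the lower bound in $(b)$ the argument is short. Any star edge coloring using three colors is in particular a proper $3$-edge-coloring, so it suffices to show that no proper $3$-edge-coloring of a cubic graph is a star coloring. In a cubic graph a proper $3$-edge-coloring splits the edges into three perfect matchings, and the union of any two of them is a $2$-regular spanning subgraph, hence a disjoint union of even cycles, each of length at least four since $G$ is simple. Such a cycle is coloured alternately with two colors, so it either is a bichromatic cycle of length four or contains four consecutive edges forming a bichromatic path of length four. Either way the star condition is violated, whence $\chis{G} \ge 4$.

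The equality characterization splits into two implications. For the easy direction I would first exhibit, by a direct check, a star edge coloring of the $3$-cube $Q_3$ with four colors, so that $\chis{Q_3}=4$. If $G$ covers $Q_3$ via a covering projection $\pi$, then pulling the coloring of $Q_3$ back along $\pi$ yields a proper edge coloring of $G$, since $\pi$ is a local isomorphism. A bichromatic path or cycle of length four in $G$ would project to a non-backtracking walk of length four in $Q_3$ using only two colors; because $Q_3$ is triangle-free, such a walk is forced to be either a path or a $4$-cycle, and in both cases it would be a bichromatic length-four configuration in $Q_3$, a contradiction. Hence the pulled-back coloring is a star coloring and $\chis{G} \le 4$, so equality holds.

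The hard direction is to show that a cubic graph with $\chis{G} = 4$ must cover $Q_3$, and this is where I expect the real difficulty. Fix a star $4$-edge-coloring; at every vertex exactly three of the four colors appear, so each vertex carries a local color type. I would analyze how properness and the length-four constraints propagate these types along edges, aiming to prove that the types are forced to agree, up to a fixed permutation of colors, with the unique pattern realized by the canonical star $4$-coloring of $Q_3$. Establishing this rigidity, namely that the coloring is essentially unique and locally cube-like, then lets me send each vertex of $G$ to the cube vertex with the corresponding type, and the matching of colored neighborhoods makes this map a covering projection. The obstacle is precisely the rigidity claim: ruling out every alternative locally consistent coloring, which I expect to demand a careful and possibly lengthy analysis of the colored neighborhoods of adjacent vertices.
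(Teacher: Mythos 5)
First, a point of order: the paper does not prove this theorem at all. It is quoted as background from the cited work of Dvo\v{r}\'{a}k, Mohar and \v{S}\'{a}mal, so there is no proof in the paper to compare against, and your proposal must be judged on its own merits. On those merits, two of your four components are sound. The lower bound in $(b)$ is complete and correct: a star $3$-edge-coloring of a simple cubic graph would be a proper $3$-edge-coloring, hence would split $E(G)$ into three perfect matchings, and the union of any two of them is a $2$-regular spanning subgraph whose components are alternating even cycles of length at least $4$, each of which contains a bichromatic $4$-cycle or $4$-path. The direction ``$G$ covers $Q_3$ implies $\chis{G} \le 4$'' is also essentially right: consecutive edges of a path or cycle in $G$ project to distinct edges under a locally bijective homomorphism, so a bichromatic $4$-path or $4$-cycle projects to a non-backtracking bichromatic walk of length $4$, which in the triangle-free graph $Q_3$ must again be a $4$-path or $4$-cycle; you do still owe the explicit star $4$-edge-coloring of $Q_3$, but that is routine.

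The genuine gaps are part $(a)$ and the converse in $(b)$, and in both cases you have named the difficulty rather than resolved it. For $(a)$, your minimal-counterexample plan collapses exactly where you concede it does: after removing vertices of degree at most $2$ the counterexample is cubic, and when an edge $uv$ is re-inserted into a star $7$-colored graph the forbidden set (the up to $4$ adjacent colors, the colors completing bichromatic $4$-paths having $uv$ as an end edge or as a middle edge, and the colors completing bichromatic $4$-cycles) can exceed $7$. Your escape hatch --- ``locally recoloring one of the neighbouring edges to release a color, arguing that such a recoloring never creates a new bichromatic configuration'' --- is precisely the statement that would need to be proved; star edge coloring admits no Kempe-chain-type exchange that makes such recoloring routine, and this is exactly why the bound $\chis{G} \le 7$ is a theorem rather than a greedy exercise. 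For the converse in $(b)$, the entire mathematical content is the rigidity claim that every star $4$-edge-coloring of a cubic graph is, up to a permutation of colors, locally the coloring of $Q_3$, so that the vertex types define a covering projection. You explicitly defer this (``I expect \dots a careful and possibly lengthy analysis''), so what you have is a correct proof skeleton for half of the theorem and an announced program, not a proof, for the other half --- and the deferred half is where all the difficulty lives.
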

A graph $G$ \textit{covers} a graph $H$ if there is a locally bijective graph homomorphism from $G$ to $H$. While there
exist cubic graphs with the star chromatic index equal to $6$, e.g., $K_{3,3}$ or Heawood graph, no example of a subcubic graph that would require $7$ colors
is known. Thus,  Dvo\v{r}ak et al. proposed the following conjecture.
\begin{conjecture}[Dvo\v{r}\'{a}k, Mohar, \v{S}\'{a}mal]
	If $G$ is a subcubic graph, then $\chis{G}~\le~6$.
\end{conjecture}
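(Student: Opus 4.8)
The plan is to prove the bound constructively by induction on the number of edges, reducing an arbitrary subcubic graph to the bridgeless cubic case and then exploiting a $2$-factor decomposition. First I would dispose of the easy local structure, in each case applying the induction hypothesis to a strictly smaller subcubic graph. A vertex of degree at most $1$ is deleted and recoloured at the end, since an edge with a degree-$1$ endpoint has at most two coloured neighbours. A vertex $w$ of degree $2$ on a path $a\,w\,b$ is \emph{suppressed}, i.e.\ replaced by the single edge $ab$ (with the usual care when $a,b$ are already adjacent); the reduced graph is coloured inductively, after which $w$ is reinserted and its two edges are coloured so as to respect properness and to destroy every bichromatic $P_4$ and $C_4$ created at $w$. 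A bridge $e=uv$ is removed, splitting $G$ into the two sides $G_1\ni u$ and $G_2\ni v$ that are coloured separately, and then $e$ is coloured after a bounded recolouring in the neighbourhoods of $u$ and $v$. Iterating these steps, it suffices to colour \emph{bridgeless cubic} graphs.

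For a bridgeless cubic graph $G$, Petersen's theorem supplies a perfect matching $M$, so that $F=E(G)\setminus M$ is a $2$-factor, i.e.\ a disjoint union of cycles. The idea is to star-edge-colour $F$ first, using that a single cycle admits a star edge colouring with at most four colours, and then to extend the colouring across $M$ using the full palette $\{1,\dots,6\}$. Each matching edge $xy$ meets at most two coloured $F$-edges at each of $x,y$, so at most four colours are forbidden by properness; the remaining restrictions on $xy$ come from the bichromatic $P_4$'s and $C_4$'s that alternate between $xy$ and the two incident cycles, and there are only boundedly many of these. The point of fixing $F$ first is precisely that every constraint on $xy$ becomes local and bounded in number, so a counting argument should leave at least one admissible colour in a palette of size six, provided the cycle colouring of $F$ is chosen to avoid the worst local patterns.

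The hard part is exactly this extension step, and it is where the tightness of the bound $6$ bites. The difficulty concentrates on the \emph{short and odd cycles} of the $2$-factor: an odd cycle already forces a fourth colour, and near a triangle or a $C_4$ of $F$ the numbers of colours excluded by properness and by alternating bichromatic paths can add up to the entire budget, so a blind greedy extension of $M$ need not succeed. Overcoming this requires choosing the star colouring of $F$ \emph{adaptively} — coordinating the colours on consecutive cycles that are joined by matching edges, and recolouring a short cycle when an incident matching edge cannot otherwise be completed — rather than colouring $F$ in isolation. I expect the crux to be a finite case analysis of these dense local configurations (a matching edge together with its at most four incident cycle edges and their immediate neighbours), showing in each case that six colours admit a consistent completion; a clean way to organise this analysis, or alternatively a Lovász-Local-Lemma or entropy-compression argument bounding the number of bad events per edge by a constant small enough to force termination with six colours, is what the whole proof ultimately turns on.
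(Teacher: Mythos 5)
You are attempting to prove a statement that the paper does not prove: it is stated there as an open conjecture of Dvo\v{r}\'{a}k, Mohar and \v{S}\'{a}mal (the best known general bound for subcubic graphs is $7$), and the paper only settles the special case of subcubic \emph{outerplanar} graphs (Theorem~\ref{thm:outersub}, with the stronger bound $5$) by an entirely structural route: reduction of nontrivial bridges, contraction of chords of the outer cycle, and the light-cactus colouring of Lemma~\ref{lem:cactus}. Your proposal is a programme rather than a proof, and you say so yourself --- the extension of the colouring across the perfect matching is left as ``what the whole proof ultimately turns on.'' That admitted hole is not a routine verification: since the conjecture is open, no ``finite case analysis of dense local configurations'' is known to close it, and probabilistic tools such as the Local Lemma or entropy compression are structurally unsuited to hitting a tight constant like $6$ (they carry multiplicative slack; they are how one proves bounds like $\Delta\cdot 2^{O(1)\sqrt{\log\Delta}}$, not $2\Delta$ for $\Delta=3$).

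Beyond the admitted gap, your preliminary reductions are already unsound, because star edge colouring is not a distance-one constraint: the admissible colours of an edge depend on edges two steps away. Concretely, for the pendant-edge step you claim an edge $uv$ with $d(v)=1$ ``has at most two coloured neighbours,'' but a bichromatic $4$-path $v\,u\,x\,y\,z$ forbids $\varphi(uv)=\varphi(xy)$ whenever $\varphi(ux)=\varphi(yz)$; counting, $\varphi(uv)$ must avoid up to $2$ colours by properness plus up to $2\times 2=4$ such conditional colours, i.e.\ up to $6$ forbidden colours in a palette of $6$, so greedy reinsertion can be completely blocked. The same distance-two effect defeats the degree-$2$ suppression as described. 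Finally, in the matching-extension step the constraints on a matching edge $xy$ are \emph{not} determined by the $2$-factor $F$ alone: a $4$-path $f_0\,m_1\,f_1\,m_2$ alternating between $F$ and $M$ forbids $\varphi(m_1)=\varphi(m_2)$ whenever $\varphi(f_0)=\varphi(f_1)$, so matching edges constrain one another at $F$-distance two, and the total number of forbidden colours on a matching edge ($4$ from properness plus the conditional $M$--$M$ and $C_4$ constraints) can meet or exceed $6$. So even the counting skeleton of your plan fails before the hard step begins; if your goal is a provable result in the spirit of this paper, the realistic target is a restricted class (as in Theorem~\ref{thm:outersub}) where global structure, not greedy extension, supplies the colouring.
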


In this paper we determine a tight star chromatic index of subcubic outerplanar graphs and trees, and an upper bound for star chromatic index of outerplanar graphs. 

Throughout the paper, we mostly use the terminology used in~\cite{BonMur08}.
In a graph with colored edges, we say that a color $c$ \textit{appears} at vertex $v$, whenever there is an edge
incident with $v$ and colored by $c$. In a rooted tree, the root is considered to be at \textit{level $0$}, its neighbors at \textit{level $1$} ($1$-\textit{level vertices}), etc. 
An edge is of \textit{level $i$} ($i$-\textit{level edge}) if its two endvertices are at level $i$ and $i+1$. A path of four edges is referred to as a $4$-path.

\section{Trees}

A tight upper bound for acyclic graphs is presented in this section. For example, a star chromatic index of every tree with a $\Delta$-vertex whose 
all neighbors are $\Delta$-vertices achieves the upper bound.
\begin{theorem}
	\label{thm:trees}
	Let $T$ be a tree with maximum degree $\Delta$. Then
	$$
		\chis{T} \le \bigg \lfloor \frac{3}{2} \Delta \bigg \rfloor\,.
	$$
	Moreover, the bound is tight.
\end{theorem}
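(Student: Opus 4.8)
The plan is to prove the upper bound by induction, coloring the tree one level at a time from the root downward, and to establish tightness by exhibiting a small tree (a $\Delta$-vertex all of whose neighbors are $\Delta$-vertices) that provably needs $\lfloor \tfrac{3}{2}\Delta\rfloor$ colors.

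\medskip

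\noindent\textbf{Upper bound.} First I would root $T$ at an arbitrary vertex and process the vertices in order of increasing level, so that when I color the edges descending from a vertex $v$ at level $i$, the single edge joining $v$ to its parent is already colored. Write $k = \lfloor \tfrac{3}{2}\Delta\rfloor$ and fix a palette of $k$ colors. The invariant I want to maintain is that the current partial coloring is a star edge coloring, and moreover that at every already-colored vertex the set of used colors leaves enough freedom to extend. Since $v$ has at most $\Delta-1$ children, I must choose colors for at most $\Delta-1$ new edges, and I have $k \ge \Delta$ colors available, so properness alone is never the issue; the whole difficulty is avoiding bichromatic $4$-paths. At $v$ the dangerous $4$-paths are those passing through $v$ and through its parent edge. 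So the key step is a counting argument: for each child edge I must forbid (i) the colors already appearing at $v$ (at most one, the parent edge), (ii) colors that together with the parent edge's color would complete a bichromatic path of length four reaching up toward the root, and (iii) colors creating a bichromatic path among two children of $v$. The budget $\lfloor \tfrac{3}{2}\Delta\rfloor$ is exactly tuned so that these constraints, counted carefully, still leave a free color for each child; I expect to split into the cases $\Delta$ even and $\Delta$ odd because the floor behaves differently.

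\medskip

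\noindent\textbf{The main obstacle.} The hardest part is controlling condition (ii): a bichromatic $4$-path through $v$ can extend two edges above $v$, so the forbidden colors for a child of $v$ depend on colors two levels up, not just on the parent edge. The clean way to handle this is to strengthen the induction hypothesis so that it records, for the parent edge of $v$, which colors are ``threatened'' from above, and to show this threatened set is small enough (bounded by roughly $\tfrac{1}{2}\Delta$) that combined with the at most $\Delta-1$ colors used on sibling and child edges the total number of forbidden colors stays below $k$. Getting the right auxiliary statement — one that is strong enough to push through yet still provable — is the crux; once it is formulated correctly, each child edge has a free color by a direct pigeonhole count against the $k = \lfloor \tfrac{3}{2}\Delta\rfloor$ palette.

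\medskip

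\noindent\textbf{Tightness.} For the lower bound I would take $T$ to be the tree consisting of a central vertex $v$ of degree $\Delta$ whose every neighbor also has degree $\Delta$ (so $v$ has $\Delta$ neighbors, each carrying $\Delta-1$ further leaves). I then argue that in any star edge coloring the $\Delta$ edges at $v$ use $\Delta$ distinct colors, and that at each neighbor $u$ the $\Delta-1$ leaf edges must avoid both the color of $uv$ and, to prevent a bichromatic $4$-path $\text{leaf}$–$u$–$v$–(other neighbor), the colors of the edges at $v$ other than $uv$ cannot be freely reused across neighbors. A short counting argument then forces at least $\lfloor \tfrac{3}{2}\Delta\rfloor$ colors in total; treating the two parities of $\Delta$ separately completes the proof.
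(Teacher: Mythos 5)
Your tightness plan uses the same tree as the paper and points at the right idea (colors of the central edges cannot be freely reused across two different neighbors of $v$), but the count is left undone; the paper makes it precise as follows: if the edge $vv_i$ has color $i$ and color $j$ appears at $v_i$, then color $i$ cannot appear at $v_j$, so the $\Delta$ central colors appear at most $\Delta(\Delta-1)/2$ times in total on the $\Delta(\Delta-1)$ pendant edges, while each of the $k$ extra colors appears at most $\Delta$ times; hence $\Delta(\Delta-1)/2 + k\Delta \ge \Delta(\Delta-1)$, giving $k \ge (\Delta-1)/2$. This half of your proposal is essentially the paper's argument in sketch form and would be routine to complete.

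The upper bound, however, has a genuine gap, and it is not merely unfinished detail: the per-edge pigeonhole you describe provably cannot close. Suppose you process level by level and are coloring a child edge $cx$ of a vertex $c$ with parent $v$ and grandparent $p$. The forbidden colors are: the up to $\Delta-1$ colors already at $c$; the color $\varphi(vp)$ if some edge at $p$ carries color $\varphi(cv)$; and, for each already-processed sibling $c'$ of $c$, the color $\varphi(vc')$ whenever some child edge of $c'$ received color $\varphi(cv)$ (otherwise the path $x\,c\,v\,c'\,y$ becomes bichromatic) --- up to $\Delta-2$ further colors. All these forbidden sets are pairwise disjoint in the worst case (the sibling threats are colors at $v$, distinct from $\varphi(cv)$, and earlier children of $c$ had to avoid them too), so the forbidden count can reach $2\Delta-2 > \lfloor \frac{3}{2}\Delta \rfloor$ once $\Delta \ge 5$. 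Note that the real obstruction is your item (iii), the sibling interactions --- which only materialize one level below $v$ and which you never count --- not the threats from above in your item (ii). These can only be tamed by coordinating the color reuse across siblings, which is exactly the content of the paper's proof and exactly what your ``auxiliary statement'' would have to encode. The paper does not use greedy counting at all: it pads every internal vertex to degree exactly $\Delta$, gives the $\lfloor\frac{\Delta}{2}\rfloor$ colors unused at the parent of $u$ to half of the child edges of $u$, and reuses the parent's own colors on the remaining $\lceil\frac{\Delta}{2}\rceil-1$ child edges via the cyclic shift $\varphi(un_i(u)) = \varphi(n_0(u)n_j(n_0(u)))$ with $j = (t(u)+i) \bmod \Delta$, where $t(u)$ is the index of $u$ among its siblings. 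With this global alignment, any bichromatic $4$-path forces two indices, each at most $\lceil\frac{\Delta}{2}\rceil-1$, to sum to exactly $\Delta$, a contradiction. Your proposal would need to either rediscover this coordination or formulate a genuinely stronger invariant; as written, the pigeonhole step fails.
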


\begin{proof}	
	First we prove the tightness of the bound. Let $T$ be a rooted tree with a root $v$ of degree $\Delta$ and let each of its neighbors $v_i$ have 
	$\Delta-1$ leafs. Thus, $T$ has $\Delta (\Delta-1)$ $1$-level edges. In an optimal coloring with $\Delta + k$ colors suppose that each edge $vv_i$ is 
	colored by $i$. Note that for any distinct neighbors $v_i$, $v_j$ of $v$ we have that if the color $j$ appears at $v_i$ the color $i$ does not
	appear at $v_j$, and vice versa. Thus, colors $1,\ldots, \Delta$ may appear together at most $\Delta(\Delta-1)/2$ times at $1$-level edges. 
	Moreover, each of the colors $\Delta+1,\ldots,\Delta + k$ may appear at each $v_i$, so together they appear $k\Delta$ times at $1$-level edges. Hence, it must hold 	
	$$
		\Delta(\Delta-1)/2 + k\,\Delta \ge \Delta(\Delta-1)\,,
	$$ 
	which implies that $k \ge (\Delta-1)/2$ and gives the tightness of our bound.
    	
	\bigskip
	Now, we present a construction of a star edge coloring $\varphi$ of $T$ using colors from a set $C$, where 
	$|C| = \big\lfloor\frac{3}{2}\Delta \big\rfloor$.
	First, we select an arbitrary vertex $v$ of $T$ to be the root of $T$. Moreover, to every vertex of degree at least $2$ in $T$
	we add some edges such that its degree increases to $\Delta$. In particular, every vertex in $T$ has degree $\Delta$ or $1$ after
	this modification. For every vertex $u$ of $T$ we denote its neighbors by $n_0(u),\dots,n_{\Delta-1}(u)$; here we always choose its eventual predecessor 
	regarding the root $v$ to be $n_0(u)$. By $t(u)$, for $u \neq v$, we denote an integer $i$ such that $u = n_i(n_0(u))$, and by $C(u)$ we denote
	the current set of colors of the edges incident to $u$. 
	
	We obtain $\varphi$ in the following way. First we color the edges incident to the root $v$ such that 
	$\varphi(vn_i(v)) = i + 1$, for $i = 0,1,\dots,\Delta-1$. Then, we continue by coloring the edges incident to the remaining vertices,
	in particular, we first color the edges incident to the vertices of level 1, then the edges incident to the vertices of level 2 etc. 
	
	Let $u$ be a $k$-level vertex whose incident $k$-level edges are not colored yet. Let $C^\prime(n_0(u)) = C\setminus C(n_0(u))$ be the set
	of colors that are not used for the edges incident to $n_0(u)$. 
	Since $|C^\prime(n_0(u))|=\big\lfloor\frac{\Delta}{2} \big\rfloor$, for the edges $un_{\Delta-i}(u)$ for $i=1,\dots,\big\lfloor\frac{\Delta}{2} \big\rfloor$ 
	we use distinct colors from the set $C^\prime(n_0(u))$.
	
	Next, we color the remaining edges incident to $u$. There are $\big\lceil \frac{\Delta}{2} \big\rceil - 1$ of such edges. 
	We color the edges one by one for $i = 1,\dots, \big\lceil \frac{\Delta}{2} \big\rceil -1$ $(*)$ with colors from $C(n_0(u))$ as follows.
	Let $j = (t(u) + i) \bmod{\Delta}$ and set
	\begin{eqnarray}
		\label{eq:2stars}
		\varphi(un_i(u)) = \varphi(n_0(u)n_{j}(n_0(u)))\,.
	\end{eqnarray}	
	Notice that after the above procedure is performed for each vertex $u$ all the edges of $T$ are colored. 
	
	It remains to verify that $\varphi$ is indeed a star edge coloring of $T$. It is easy to see that the edges incident to the same vertex 
	receive distinct colors, so we only need to show that there is no bichromatic $4$-path in $T$.
	Suppose, for a contradiction, that there is such a path $P = wxyzt$ with $\varphi(xw) = \varphi(yz)$ and $\varphi(xy) = \varphi(tz)$. 
	Assume, without loss of generality, that $d(v,w) > d(v,z)$. Note also that $y = n_0(x)$, $w = n_i(x)$, for some $i > 0$, and $z = n_j(y)$, for some $j \ge 0$.
	Since 
	\begin{eqnarray*}
		\varphi(xw) &=& \varphi(yz) \\
		\varphi(xn_i(x)) &=& \varphi(n_0(x)n_j(n_0(x)))\,,
	\end{eqnarray*}
	by~\eqref{eq:2stars} we have that $j = (t(x) + i) \bmod{\Delta}$. Hence, $z = n_{t(x) + i}(y)$ and by $(*)$ we have
	\begin{eqnarray}
		\label{eq:a}
		1\le i \le \bigg\lceil \frac{\Delta}{2} \bigg\rceil -1.
	\end{eqnarray}
	Now, we consider two cases regarding the value of $j$. First, suppose $j = (t(x) + i) \bmod{\Delta} = 0$ (it means that $i + t(x) = \Delta$).
	Since 
	\begin{eqnarray*}
		\varphi(z t) &=& \varphi(y x) \\
		\varphi(n_0(y) n_\ell(n_0(y))) &=& \varphi(y n_{t(x)}(y))\,,
	\end{eqnarray*}
	as above, by~\eqref{eq:2stars}, we have that $\ell = (t(y) + t(x)) \bmod{\Delta}$ and by $(*)$
	\begin{eqnarray}
		\label{eq:b}
		1\le t(x) \le \bigg\lceil \frac{\Delta}{2} \bigg\rceil -1.
	\end{eqnarray}
	By summing~\eqref{eq:a} and~\eqref{eq:b} and using $i + t(x) = \Delta$, we obtain
	
	\begin{eqnarray*}
		\Delta = i + t(x) \le 2 \bigg \lceil \frac{\Delta}{2} \bigg \rceil-2 < \Delta,
	\end{eqnarray*}
	a contradiction. Second, let $j > 0$. Then, $y = n_0(z)$ and
	\begin{eqnarray*}
		\varphi(y x) &=& \varphi(z t) \\
		\varphi(n_0(z) n_{t(x)}(n_0(z))) &=& \varphi(z n_{\ell}(z))\,.
	\end{eqnarray*}
	By~\eqref{eq:2stars}, $t(x) = (t(z) + \ell) \bmod{\Delta}$ and 
	\begin{eqnarray}
		\label{eq:c}
		1\le \ell \le \bigg \lceil \frac{\Delta}{2} \bigg \rceil-1\,.
	\end{eqnarray}
	Moreover, since $j = t(x) + i$ and $t(z) = j$, we have
	$$
		t(x) = (t(z) + \ell) \bmod{\Delta}  = (t(x) + i + \ell) \bmod{\Delta}\,.
	$$
	So, $i + \ell = \Delta$ and, similarly as in the previous case, by summing~\eqref{eq:a} and~\eqref{eq:c} we obtain
	\begin{eqnarray*}
			\Delta =  i + \ell \le 2\bigg \lceil \frac{\Delta}{2} \bigg \rceil < \Delta,
	\end{eqnarray*}
	a contradiction that establishes the theorem.
\end{proof}

\section{Outerplanar graphs}

In this section we consider outerplanar graphs. First, we use the bound from Theorem~\ref{thm:trees} to derive an upper bound for outerplanar graphs.
\begin{theorem}
	\label{thm:outer}
	Let $G$ be an outerplanar graph with maximum degree $\Delta$. Then,
	$$
		\chis{G} \le \bigg \lfloor \frac{3}{2} \Delta \bigg \rfloor + 12\,.
	$$
\end{theorem}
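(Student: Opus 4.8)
The plan is to leverage the tree bound of Theorem~\ref{thm:trees} by decomposing $E(G)$ into a spanning forest, which carries the high-degree load, and a sparse remainder which can be absorbed into a constant number of extra colors. First I would reduce to the case where $G$ is $2$-connected (the general case follows by handling blocks one at a time and reconciling colorings at cut vertices, where only $4$-paths crossing the cut vertex need attention). A $2$-connected outerplanar graph has a natural structure: an outer Hamiltonian cycle together with a set of pairwise non-crossing chords, its weak dual is a tree, and its vertices admit a $2$-tree elimination ordering $v_1,\dots,v_n$ in which each $v_i$ has its earlier neighbours forming an edge. I would use this structure to split $E(G)$ into a spanning forest $F$ and a remainder $D=E(G)\setminus E(F)$, choosing $F$ so that it contains the edges at the high-degree vertices (as in a fan, where the star at the apex goes into $F$ and a path is left behind). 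The essential structural claim to establish here is that, because outerplanar graphs contain no $K_{2,3}$- and no $K_4$-minor, one can always arrange $F$ so that the remainder $D$ has maximum degree bounded by a small constant.

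Next I would colour $F$ with the palette $C$ of size $\lfloor \tfrac32\Delta\rfloor$ using Theorem~\ref{thm:trees} (valid since $\Delta(F)\le\Delta$), and reserve a disjoint set of $12$ fresh colours exclusively for the edges of $D$. The point of using fresh colours on $D$ is that properness between $D$-edges and $F$-edges is then automatic, so the only proper-colouring constraint among the $12$ colours is that two $D$-edges sharing a vertex get different colours, which is cheap once $\Delta(D)$ is bounded. I would assign the $12$ colours to $D$ greedily along the elimination order, maintaining the invariant that no bichromatic $4$-path is ever created.

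The hard part will be verifying the star condition, and in particular ruling out \emph{mixed} bichromatic $4$-paths. A bichromatic $4$-path $wxyzt$ uses a colour pattern $a,b,a,b$, and three regimes arise: both $a,b$ fresh (all four edges in $D$), which is controlled by a proper star edge colouring of the bounded-degree graph $D$; one colour fresh and one old, the genuinely delicate case, in which the two equal-coloured $D$-edges $wx$ and $yz$ are linked by an $F$-edge $xy$ and paired with an $F$-edge $zt$ of the same old colour. The obstacle is that a single $D$-edge at a high-$F$-degree vertex $x$ can a priori conflict with $D$-edges hanging off many neighbours of $x$, so a naive degree count does not bound the number of fresh colours. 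Here I expect outerplanarity to do the real work: since at each vertex the forest colouring places each old colour on at most one incident edge, and since the $K_{2,3}$-free structure severely limits how two chords can be threaded through a common vertex, the set of $D$-edges that truly conflict with a given $D$-edge is of constant size. A careful case analysis of these configurations should show that $12$ fresh colours always leave an available choice, which is exactly where the additive constant in the statement comes from.
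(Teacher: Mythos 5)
Your proposal follows the same opening move as the paper---color a spanning tree/forest with $\big\lfloor \frac{3}{2}\Delta \big\rfloor$ colors via Theorem~\ref{thm:trees} and pay a constant number of fresh colors for the remaining edges---but it has a genuine gap exactly where you yourself locate ``the hard part.'' The mixed bichromatic $4$-paths (one old color on two $F$-edges, one fresh color on two $D$-edges) are never ruled out: you write that you ``expect outerplanarity to do the real work'' and that a case analysis ``should show'' that $12$ colors suffice, but no such analysis is given, and the structural claim you lean on---that the set of $D$-edges conflicting with a given $D$-edge has constant size---is false as stated. Take a spider with center $x$ and legs $x\,y_i\,z_i\,t_i$ for $i=1,\dots,\Delta-1$, with $xy_i, z_it_i \in F$ and $y_iz_i \in D$: nothing prevents a valid coloring of $F$ from assigning $\varphi(z_it_i)=\varphi(xy_i)$ for every $i$ (the edges $z_it_i$ lie in separate components of $F$), and then a $D$-edge at $x$ conflicts with all $\Delta-1$ edges $y_iz_i$ simultaneously. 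What would save a greedy procedure is that these conflicting edges might be forced to share few \emph{colors}, but that is a delicate invariant about the coloring process, not a structural constant, and it is precisely what remains unproved. Two smaller gaps: the reduction to $2$-connected blocks is not automatic for star edge colorings (a $4$-path may have two edges in each of two blocks meeting at a cutvertex, and with a fixed palette you cannot freely permute colors in each block independently), and the claim that $F$ can be chosen with $\Delta(D)$ bounded is asserted rather than proved.

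The paper closes this hole not with a greedy argument but with a leveling trick that makes mixed bichromatic paths impossible for arithmetic reasons, and this is worth internalizing. It takes the spanning tree to be distance-preserving (BFS) from a root, so every non-tree edge is either \emph{horizontal} (endpoints on the same level) or \emph{diagonal} (endpoints on consecutive levels). Horizontal edges on each level form a linear forest (no $K_4$-minor) and receive a palette of $3$ colors depending only on the parity of the level; $k$-diagonal edges receive a palette of $2$ colors depending only on $k \bmod 3$, and the bipartite graph of tree plus diagonal edges between consecutive levels is a forest with strong degree restrictions (no $K_{2,3}$-minor). Because the palettes of distinct classes are disjoint and every edge of $G$ changes level by at most one, two equally colored non-tree edges lying on a common $4$-path are forced to sit at the \emph{same} level, and the within-level structure then kills the path outright---no greedy choices, no conflict counting. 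If you want to complete your proof, you need either this level-separation mechanism or some substitute of comparable strength for your unproven constant-conflict claim; without it, the argument does not go through.
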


\begin{proof}
	Let $G$ be an outerplanar graph and $v$ a vertex of $G$. Let $T$ be a rooted spanning tree of $G$ with the root $v$ such that for any vertex 
	$u \in V(G)$ is $d_G(v,u) = d_T(v,u)$. By Theorem~\ref{thm:trees}, there exists a star edge coloring $\varphi$ of $T$ with at most $\big \lfloor \frac{3}{2} \Delta \big \rfloor$ colors.

	Now, we consider the edges of $G$ which are not contained in $T$. Let $uw \in E(G)\setminus E(T)$ be an edge such that $d(v,u) = k$, 
	$d(v,w) = \ell$, and $k \le \ell$. By the definition of $T$, we have that either $k = \ell$ or $k = \ell - 1$. In the former case, we refer to
	$uw$ as a \textit{horizontal edge} (or a \textit{$k$-horizontal edge}), and in the latter case, we call it a \textit{diagonal edge} (or a \textit{$k$-diagonal edge})
	(see Fig.~\ref{fig:outer} for an example).
	\begin{figure}[ht]
		$$
			\includegraphics{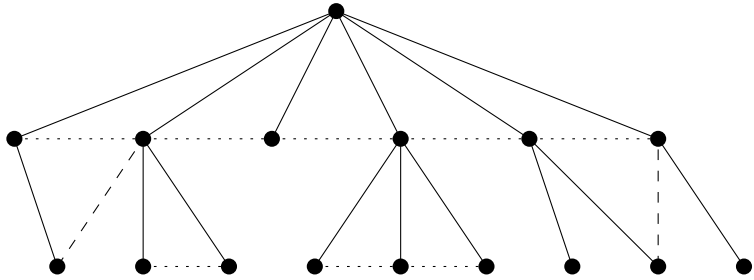}
		$$
		\caption{An outerplanar graph with the edges of the spanning tree depicted solid, the horizontal edges dotted and the diagonal edges dashed.}
		\label{fig:outer}
	\end{figure}

	Since outerplanar graphs do not contain $K_{4}$-minors, we have that the horizontal edges induce a linear forest in $G$, i.e., a graph in which
	every component is a path. It is easy to see that $3$ colors suffice for a star coloring of a path. We use three additional colors
	for all the $k$-horizontal edges when $k$ is odd, and another three colors for the $k$-horizontal edges when $k$ is even, six additional colors altogether.
	Obviously, no bichromatic $4$-path is introduced in $G$.

	It remains to color the diagonal edges. Let $L_k$ be the bipartite graph whose vertices are the $k$-level and $(k+1)$-level
	vertices of $G$, and two vertices are adjacent if they are adjacent in $T$ (\textit{red edges}), or they are connected by a $k$-diagonal edge (\textit{blue edges}). 
	Since $G$ is $K_{2,3}$-minor free, $L_k$ is acyclic. 

	Every $k$-level vertex is incident to at most two blue edges in $L_k$, and every 
	$(k+1)$-level vertex is incident to at most one blue edge in $L_k$. If a $k$-level vertex $w$ is incident to two blue
	edges $wx$ and $wy$, none of its successors in $T$ is incident to a blue edge in $L_k$, and the predecessors of $x$ and $y$ in $T$ are incident to at most one
	blue edge in $L_k$. 

	This properties imply that the $k$-diagonal edges can be colored with two colors such that no bichromatic path is introduced in $G$.
	For instance, we first color the edges incident to eventual $k$-level vertices that are incident to two $k$-diagonal edges, and then the remaining edges.
	We use $2$ distinct colors for the $k$-diagonal edges when $k \equiv 0 \pmod{3}$, another two when $k \equiv 1 \pmod{3}$, and two when $k \equiv 2 \pmod{3}$, so $6$ altogether.

	Hence, we obtain a star edge coloring of $G$ with at most $\lfloor \frac{3}{2} \Delta \rfloor$ + $12$ colors.
\end{proof}

In a special case when a maximum degree of an outerplanar graph is $3$, we prove that
five colors suffice for a star edge coloring. Moreover, the bound is tight. Consider a $5$-cycle $v_1v_2v_3v_4v_5$ with one pendent edge at every vertex 
(see Figure~\ref{fig:outersub}). 
\begin{figure}[ht]
	$$
		\includegraphics{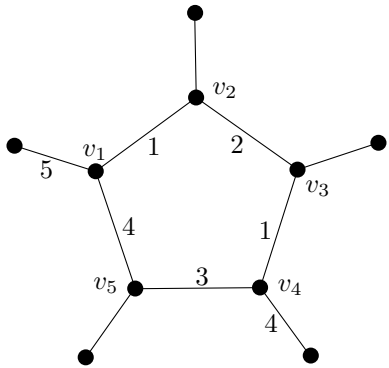}
	$$
	\caption{A subcubic outerplanar graph with the star chromatic index $5$.}
	\label{fig:outersub}
\end{figure}
In case that only four colors suffice, all four should be used on the
edges of the cycle and only one could be used twice, so the coloring of the cycle is unique up to the permutation of colors. 
Consider the coloring of the cycle in Figure~\ref{fig:outersub}. Only the color $4$ may be used for the pendent edge incident to $v_4$.
But, in this case, all the colors are forbidden for the pendent edge incident to $v_1$, which means that we need at least
$5$ colors for a star edge coloring of the graph. In particular, we may color all the pendent edges of the graph by color $5$
and obtain a star $5$-edge coloring.

Before we prove the theorem, we present a technical lemma. A \textit{cactus graph} is a (multi)graph in which 
every block is a cycle or an edge. We say that a cactus graph is a \textit{light} cactus graph if the following properties hold:
\begin{itemize}
	\item all the vertices have degree $2$ or $4$;
	\item only the vertices of degree $4$ are cutvertices;
	\item on every cycle there are at most two vertices of degree $2$. Moreover, if there are two vertices of degree $2$ 
			on a cycle, then they are adjacent.
\end{itemize}
  
\begin{lemma}
	\label{lem:cactus}
	Every outerplanar embedding of a light cactus graph admits a proper $4$-edge coloring such that no 
	bichromatic $4$-path exists on the boundary of the outer face.
\end{lemma}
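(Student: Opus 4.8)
The plan is to induct on the number of blocks, building the coloring outward from a root of the block-cut tree. I would first record the structural consequences of lightness. Every cutvertex has degree $4$, so in any proper $4$-edge coloring all four colors occur at each cutvertex, and the two edges of a given cycle meeting at a cutvertex get a complementary pair of colors. Since $G$ is an outerplanar cactus, every cycle has empty interior; in the bridgeless case each cutvertex then lies in exactly two cycles, and bridges (whose endpoints are forced to have degree $4$) are treated as a degenerate case. Tracing the boundary of the outer face, at each cutvertex the walk passes twice, each passage pairing one edge of one incident cycle with one edge of the other; hence two edges of the same cycle are consecutive on the boundary only at the (at most two) degree-$2$ vertices of that cycle. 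The bichromatic $4$-paths to be forbidden therefore fall into two kinds: those lying inside a single cycle, which can occur only near its degree-$2$ vertices, and those crossing a cutvertex, which couple the colorings of the two incident cycles (and, when the cycle between two cutvertices is short, of three cycles).

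I would root the block-cut tree and color the cycles in order of increasing distance from the root. To close the induction I would strengthen the statement: a light cactus rooted at a cutvertex $v$ admits the desired coloring for every admissible proper coloring of the two parent edges at $v$ and their boundary-neighbors on the parent side. When a new cycle $C$ is attached at $v$, its two edges at $v$ are forced to the two colors missing at $v$; it then remains to color the other edges of $C$ so that the coloring stays proper, the at most two crossing-$v$ paths are not bichromatic, and the $4$-paths inside $C$ near its degree-$2$ vertices are not bichromatic. Coloring the edges of $C$ one by one around the cycle, at every other cutvertex $u$ of $C$ the two $C$-edges incident to $u$ form the parent interface handed to the child attached at $u$, and the crossing-$u$ paths are deferred to the step that colors that child.

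The main obstacle is to show that the extension step never gets stuck, that is, that for every admissible parent interface the cycle $C$ can be completed. The delicate cases are the short cycles permitted by lightness -- triangles and digons -- in which a single free edge of $C$ lies on a crossing path at $v$ and simultaneously on a crossing path at a neighboring cutvertex, so that several constraints fall on the same edge. Here I would exploit the two remaining hypotheses, that each cycle carries at most two degree-$2$ vertices and that two such vertices are adjacent: this confines the within-cycle constraints to a single short run of consecutive boundary edges and leaves, at each cutvertex, a reservoir of two colors not used by the forced edges. I expect the argument to finish with a case analysis according to the number ($0$, $1$, or $2$) and placement of the degree-$2$ vertices on $C$, verifying in each case that the free edges can be colored so that every boundary $4$-path through $C$ receives at least three colors; the two-color reservoir at each cutvertex is precisely what supplies the needed freedom.
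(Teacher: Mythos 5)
Your proposal stops exactly where the real work begins. The statement that the cycle-extension step ``never gets stuck'' is the entire content of the lemma, and you defer it to an unperformed case analysis (``I expect the argument to finish\dots''). Worse, the strengthened induction hypothesis you propose --- completability for \emph{every} admissible proper coloring of the two parent edges and their boundary-neighbours --- is false unless ``admissible'' excludes some perfectly proper interfaces, and then defining admissibility (and proving the parent step only ever produces admissible interfaces) \emph{is} the problem. Concretely, let the child be a digon attached at a cutvertex $v$ (a cycle of length two through $v$ and a degree-$2$ vertex, which lightness allows, and which the paper itself uses), and suppose the boundary walk on the parent side reads $a,b,p_1$ entering $v$ and $p_2,b',a'$ leaving it, with colors $c(a)=1$, $c(b)=3$, $c(p_1)=1$, $c(p_2)=2$, $c(b')=3$, $c(a')=2$; this is proper and star-free on the parent side. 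The two digon edges $e_1,e_2$ are forced to the colors $3,4$ in some order. The $4$-path $(a,b,p_1,e_1)$ is bichromatic unless $c(e_1)=4$, and the $4$-path $(e_2,p_2,b',a')$ is bichromatic unless $c(e_2)=4$; since $e_1,e_2$ need distinct colors, both assignments fail. So a top-down scheme in which the parent is colored first and each child must cope with whatever interface it inherits cannot work as stated: the parent's coloring needs foresight about the interfaces it creates, and for short blocks these foresight constraints couple three cycles at once --- exactly the circularity that your ``defer the crossing-$u$ paths to the child's step'' rule does not resolve (note also that your interface omits $a,a'$, so the child cannot even see the constraint just exhibited).

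The paper's proof runs the induction in the opposite direction precisely to avoid quantifying over interfaces. It inducts on the number of cutvertices, picks a block $M$ containing at least \emph{two} cutvertices, and colors each hanging component $L_i$ \emph{first}, by applying induction to $L_i$ with an artificial digon attached at its cutvertex --- a single canonical interface rather than an arbitrary one. It then colors $M$ together with its incident ``tentacle'' edges explicitly (two cases, $M$ even or odd, with small recoloring fixes), arranging two properties: every four consecutive boundary edges of this subgraph see at least three colors, and every three consecutive boundary edges starting at a tentacle see three \emph{distinct} colors. Finally it permutes the colors inside each $L_i$ so that its two tentacle colors match. The rainbow-triple-at-a-tentacle property makes every boundary $4$-path that crosses an interface automatically safe, no matter what the child looks like, so no admissibility notion is ever needed. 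If you wanted to rescue your top-down plan, you would have to build exactly this kind of guarantee into your invariant (e.g.\ require that the parent always delivers a rainbow triple at each attachment point), at which point you would essentially be reconstructing the paper's argument.
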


\begin{proof}
	The proof is by induction on the number $n_c$ of cutvertices of a light cactus graph $L$. The base case 
	with $n_c = 0$ is trivial, since $L$ has at most two edges. In case when $n_c = 1$, $L$ is isomorphic to one of the three graphs
	depicted in Fig.~\ref{fig:subcubic1}, together with the colorings of the edges.
	\begin{figure}[htb]
		$$
			\includegraphics{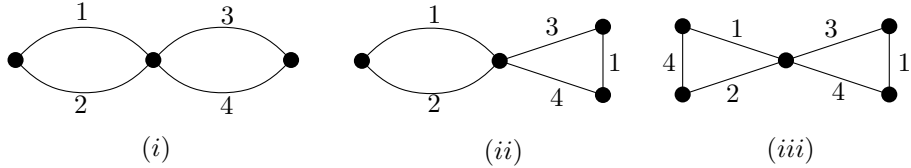}
		$$
		\caption{The three light cactus graphs with one cutvertex.}
		\label{fig:subcubic1}
	\end{figure} 

	Now, let $L$ contain $k$ cutvertices, $k \ge 2$. Then, there is a block $M$ of $L$ that contains at least two cutvertices of $L$. 
	Let $v_1, v_2,\dots,v_\ell$, $2 \le \ell \le k$, be the cutvertices of $L$ in $M$. Every $v_i$ is incident to two edges $t_i^1$ and $t_i^2$ of $L$ 
	such that $t_i^1,t_i^2 \notin E(M)$, we call such edges \textit{tentacles}. Note that these two edges belong to a same block.

	Let $L_i$ be the component of $L \setminus E(M)$ containing $v_i$ (and so $t_i^1$ and $t_i^2$). Let $H_i$ be the graph obtained from $L_i$ by connecting 
	the vertex $v_i$ with a new vertex $u_i$ with two edges $e_i$ and $f_i$. By induction hypothesis, every $H_i$ admits a coloring with at most $4$ colors. 

	Next, let $L_M$ be a subgraph of $L$ consisting of the cycle $M$ and all its tentacles.
	We color the edges of $L_M$ as follows. If $M$ is an even cycle, we color the edges of $M$ by the colors $1$ and $2$, 
	and the tentacles by colors $3$ and $4$ in such a way that the tentacles of color $3$ are always on the same side, i.e., contraction of the edges
	of $M$ results in a star with the edges alternately colored by $3$ and $4$ (see Fig.~\ref{fig:tentacles}).
	\begin{figure}[htb]
		$$
			\includegraphics{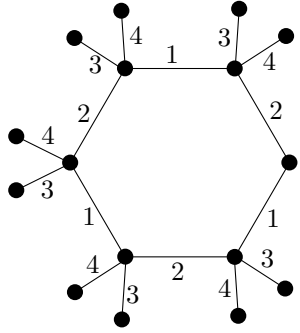}
		$$
		\caption{An example of an edge coloring of a graph $L_M$, when $M$ is an even cycle.}
		\label{fig:tentacles}
	\end{figure} 

	In case when $M$ is an odd cycle, it contains a pair of adjacent cutvertices $u$ and $v$ of $L$. Without loss of generality, we choose $u$ to be 
	incident to an eventual vertex of degree $2$ of $M$. 
	We color the edge $uv$ with color $3$, and the remaining edges of $M$ alternately by the colors $1$ 
	and $2$ starting at the edge incident to $u$. Next, we color the tentacles by colors $3$ and $4$
	similarly as in the previous case such that the tentacles of color $3$ are always on the same side, in particular the edge of color $4$ incident
	to $u$ is consecutive to the edge of color $1$ incident to $u$ on the boundary of the outerface of $L_M$. In the end, we recolor the tentacles incident to
	the vertices $u$ and $v$ as follows. 
	
	In case when $M$ is a cycle of length $3$ with a vertex of degree $2$, we recolor the tentacle of color $3$ incident 
	to $u$ by color $2$, and the tentacles of color $4$ and $3$ incident to $v$ by colors $1$ and $4$, respectively
	(see the left graph in Fig.~\ref{fig:tentaclesodd}). Otherwise, if $M$ is not a cycle of length $3$ with a vertex of degree $2$, 
	we recolor the tentacle of color $3$ incident to $u$ by color $2$, and the 
	tentacle of color $3$ incident to $v$ by color $1$ (see the right graph in Fig.~\ref{fig:tentaclesodd} for an example).
	\begin{figure}[htb]
		$$
			\includegraphics{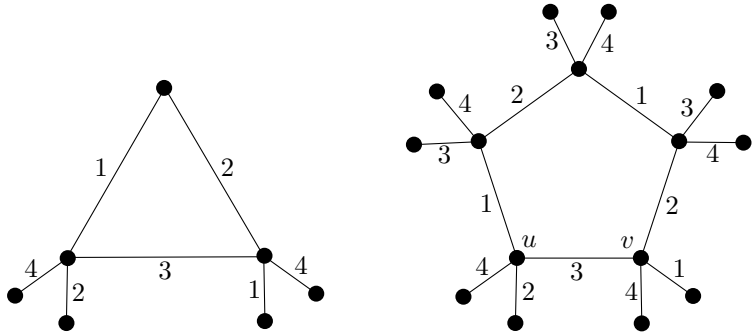}
		$$
		\caption{Examples of an edge coloring of a graph $L_M$, when $M$ is an odd cycle.}
		\label{fig:tentaclesodd}
	\end{figure} 
	Observe that every four consecutive edges along the boundary of the outer face in $L_M$ receive at least three distinct colors. 
	Moreover, every three consecutive edges along the boundary of the outer face of $L_M$ starting at a tentacle receive three distinct colors.

	Finally, we permute the colors in each component $L_i$ to match the coloring of $L_M$ (we identify the edges $e_i$ and $f_i$ with 
	the tentacles of $M$ incident to $v_i$). It is easy to see that we obtain an edge coloring without bichromatic $4$-path along the boundary 
	of the outer face of the graph $L$. Namely, let $P$ be a $4$-path along the outer face in $L$. If at least three edges of $P$ are in $L_M$, 
	then $P$ is not bichromatic. On the other hand, if $L_M$ contains at most two edges of $P$, then all the edges of $P$ belong to some component $L_i$.
	Hence, we have shown that every outerplanar embedding of a light cactus graph $L$ admits a proper $4$-edge coloring such that no 
	bichromatic $4$-path exists on the boundary of the outer face, and so establish the lemma.
\end{proof}

\begin{theorem}
	\label{thm:outersub}
	Let $G$ be a subcubic outerplanar graph. Then,
	$$
		\chis{G} \le 5\,.
	$$
\end{theorem}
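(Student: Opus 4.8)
The plan is to argue by induction on the number of vertices of $G$, keeping one of the five colors (say the color $5$) in reserve to repair the coloring at the places where the structure is glued together. First I would dispose of the non-$2$-connected case. If $G$ has a cut vertex (in particular, if it has a vertex of degree at most $1$ or a bridge), then $G$ is built from smaller subcubic outerplanar graphs sharing a single vertex $x$ of degree at most $3$. Each of these pieces is star $5$-colorable by the induction hypothesis, and since at most three edges meet at $x$, we may permute the five colors inside each piece so that the edges at $x$ receive pairwise distinct colors and, in addition, no two consecutive edges of distinct pieces reproduce the color pattern that would create a bichromatic $4$-path through $x$. The only $4$-paths not contained in a single piece are those passing through $x$, and with five colors there is enough freedom to break all of them; this reduces the statement to $2$-connected graphs.

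So assume $G$ is $2$-connected (if $G$ is merely a cycle it is directly $5$-colorable, so suppose it has a chord). Then $G$ consists of its outer cycle together with a family of pairwise non-crossing chords, and since $\Delta(G)\le 3$ every vertex lies on at most one chord. The idea is to extract from $G$ a light cactus to which Lemma~\ref{lem:cactus} applies. I would first suppress the maximal paths of degree-$2$ vertices (smoothing them to single edges), so that the resulting multigraph is cubic, and then contract every chord. Since the weak dual of an outerplanar graph is a tree and two inner faces share exactly one chord, contracting the chords turns the inner faces into cycles glued at the (now degree-$4$) images of the chords, that is, it produces a cactus $L$ whose cutvertices are exactly the degree-$4$ vertices and which has no degree-$2$ vertices. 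One checks that $L$ satisfies the three conditions of a light cactus, and that its outer boundary is precisely the outer cycle of $G$. By Lemma~\ref{lem:cactus}, $L$ admits a proper $4$-edge coloring with no bichromatic $4$-path along its outer boundary.

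This coloring pulls back to a coloring of all non-chord edges of $G$ with the colors $1,2,3,4$, having no bichromatic $4$-path along the outer cycle of $G$. It remains to reinsert the suppressed degree-$2$ vertices, recoloring each smoothed path as a properly star-colored path (three colors always suffice for a path, and the reserved color $5$ is available to adjust the two end edges), and then to color the chords. Each chord joins two vertices already carrying two colors from $\{1,2,3,4\}$, so I would color the chords using the reserved color $5$ together with suitable colors of $\{1,2,3,4\}$, chosen so that the only potentially dangerous $4$-paths are not bichromatic. Indeed, any $4$-path of $G$ either stays on the outer boundary (handled by the Lemma), lies inside a single smoothed path (handled by the $3$-coloring of that path), or uses at least one chord, and it is exactly this last type that the fifth color and the freedom in the reinsertion must neutralize.

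The main obstacle is the last step: controlling the bichromatic $4$-paths that traverse a chord. Contracting a chord merges the two faces on its sides, so a $4$-path crossing a chord corresponds to a walk that the Lemma does not directly see, and one must verify that coloring the chord (with $5$, or with an appropriately chosen color) together with the local pattern guaranteed by the coloring of Lemma~\ref{lem:cactus} prevents such a path from becoming bichromatic; here the finer property established in the proof of that lemma, namely that every three consecutive boundary edges starting at a tentacle receive three distinct colors, is what I expect to carry the argument. Organizing this case analysis according to the position of each chord relative to the degree-$2$ vertices of its incident faces, and checking compatibility with the cut-vertex gluing, is the technical heart of the proof; the tightness example of the $5$-cycle with pendant edges shows that the reserve color genuinely cannot be dispensed with.
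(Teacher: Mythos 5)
Your proposal follows the paper's skeleton — reduce to a bridgeless/2-connected structure, turn it into a light cactus, apply Lemma~\ref{lem:cactus}, and reserve color $5$ for chords and pendent edges — but both of your reduction steps contain genuine gaps. The first is the cut-vertex gluing. The paper splits only at \emph{nontrivial bridges} $uv$: there the shared structure is an \emph{edge} contained in both pieces, so one can permute colors to make $uv$ color $5$ in both pieces and give the (at most four) edges incident to $uv$ pairwise distinct colors, which kills every $4$-path through the bridge. You split at an arbitrary cut vertex $x$ and assert that permuting the five colors inside each piece ``breaks all'' $4$-paths through $x$. That is false as stated: a global permutation of a piece's coloring permutes, but never shrinks, the set of colors forbidden on the edges of the other piece. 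Concretely, let piece $2$ be a single pendent edge $x\ell$, and let piece $1$ color the edges $xu_1, xu_2$ with $1,2$, the edges $u_1v_1, u_1v_2$ with $3,4$ where $v_1$ and $v_2$ each have a further edge colored $1$, and the edge $u_2v_3$ with $5$ where $v_3$ has a further edge colored $2$. This is a valid star $5$-edge coloring of piece $1$ (a subcubic tree), yet each of the five colors on $x\ell$ is either improper at $x$ or completes a bichromatic $4$-path, and no permutation of piece $1$'s colors helps. So your induction would need a strengthened hypothesis or the paper's bridge-based formulation; as written, the reduction fails.

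The second gap is in the $2$-connected case. The paper does \emph{not} suppress degree-$2$ vertices; it \emph{adds} chords joining nonadjacent $2$-vertices on common inner faces, so that after contracting all chords the light cactus $G''$ has its edges in bijection with the outer cycle edges of $G$. The coloring from Lemma~\ref{lem:cactus} is then literally a coloring of the cycle edges of $G$, and properness at each contracted degree-$4$ vertex guarantees that every chord of $G$ is incident to four distinctly colored edges — so coloring all chords and pendent edges with $5$ finishes the proof immediately. Your smoothing destroys this bijection: the lemma assigns a single color to each smoothed path, and the reinsertion/recoloring step, which you yourself flag as ``the technical heart,'' is exactly where the proof is missing. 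It is also where the danger sits: after recoloring subdivided paths, the four edges around a chord need no longer be distinct, so the pattern cycle-edge, chord, cycle-edge, chord with colors $a,5,a,5$ is no longer excluded, and $4$-paths lying half inside a smoothed path and half outside are controlled by nothing. (Smoothing can even create a loop: a face bounded by a chord plus a path of $2$-vertices smooths to a digon and contracts to a loop, a case Lemma~\ref{lem:cactus} never addresses.) The repair is precisely the paper's move: pair up the $2$-vertices with added chords rather than suppressing them, after which no reinsertion is needed at all.
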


\begin{proof}
	Let $G$ be a minimal counterexample to the theorem, i.e., a subcubic outerplanar graph with the minimum number of edges that
	does not admit a star edge coloring with at most $5$ colors. We also assume that $G$ is embedded in the plane.
	We first discuss a structural property of $G$ and then show that there exists a star $5$-edge coloring in $G$, hence obtaining a contradiction.
	
	Suppose that $G$ contains a bridge $uv$, where $u$ and $v$ have degree at least $2$ (we call the edge $uv$ a \textit{nontrivial bridge}). 
	Let $G_u$ and $G_v$ be the two components of $G - uv$ containing the vertex $u$ and $v$, respectively. 
	Let $H_u = G[V(G_u) \cup \set{v}]$, i.e., a subgraph of $G$ induced by the vertices in $V(G_u) \cup \set{v}$, and similarly, let $H_v = G[V(G_v) \cup \set{u}]$. 
	By the minimality of $G$, there exist star $5$-edge colorings of $H_u$ and $H_v$. We may assume (eventually after a permutation of the
	colors), that $uv$ is given the color $5$ in both graphs $H_u$ and $H_v$ and all the edges incident to the edge $uv$ are colored by distinct colors. 
	It is easy to see that the colorings of $H_u$ and $H_v$ induce a star $5$-edge coloring of $G$, a contradiction.
	
	Hence, we may assume that $G$ has no nontrivial bridge, so $G$ is comprised of an outer cycle $C = v_1v_2\dots v_k$, some chords of $C$ and some pendent edges. 
	Now, we construct a star $5$-edge coloring of $G$. We will show that there exists a star edge coloring of the edges of $C$ by the 
	four colors in such a way that every chord of $C$ is incident to the four distinctly colored edges.
	That means that after we color the chords and pendent edges by the fifth color a star $5$-edge coloring of $G$ is obtained.
	
	First, we remove the pendent edges and then maximize the number of chords of $C$ by joining pairs of nonadjacent $2$-vertices incident to common 
	inner faces. We obtain a $2$-connected subcubic outerplanar graph $G'$ which is either isomorphic to $C_3$ or every inner face $f$ of $G'$ is incident 
	to at most two $2$-vertices. Moreover, if $f$ is incident to two $2$-vertices, they are adjacent. 
	
	Now, let $G''$ denote the graph obtained from 
	$G'$ by contracting the chords. Observe that $G''$ is an outerplanar multigraph with vertices of degree $2$ or $4$, where the $4$-vertices are the 
	cutvertices obtained by chord contractions, and the $2$-vertices of $G''$ are the $2$-vertices of $G'$. 
	In fact, $G''$ is a light cactus graph (see Fig.~\ref{fig:lightcactus} for an example).
	\begin{figure}[htb]
			$$
				\includegraphics{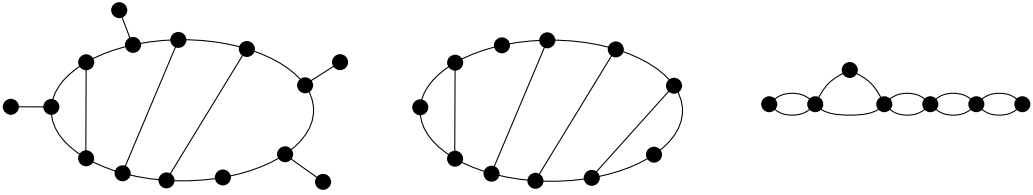} 
			$$
			\caption{An example of the graphs $G$, $G'$ and $G''$.}
			\label{fig:lightcactus}
	\end{figure}	

	It remains to find a star coloring of $G$. By Lemma~\ref{lem:cactus}, we can color the edges of $G''$ with four colors such that no 
	bichromatic $4$-path exists on the boundary of the outer face. This coloring induces a partial edge coloring of $G$, where only the chords of $C$ and
	the pendent edges remain noncolored. We color them by color $5$. Since all the edges incident to any edge of color $5$ receive distinct colors,
	we obtain a star $5$-edge coloring of $G$. This completes the proof.
\end{proof}

In Theorem~\ref{thm:outer}, we proved that $\big \lfloor \frac{3}{2} \Delta \big \rfloor + 12$ colors suffice for a star edge coloring
of every outerplanar graph. Let us mention that the constant $12$ can be decreased to $9$ using more involved analysis.
However, we prefer to present a short proof, since the precise bound is probably considerably lower. In particular, we believe that the following conjecture holds.
\begin{conjecture}
	Let $G$ be an outerplanar graph with maximum degree $\Delta\ge 3$. Then,
	$$
		\chis{G} \le \bigg \lfloor \frac{3}{2} \Delta \bigg \rfloor +1\,.
	$$	
\end{conjecture}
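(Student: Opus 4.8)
The plan is to argue by induction on the number of edges, taking a minimal counterexample $G$ of maximum degree $\Delta$ and deriving a contradiction; the base case $\Delta=3$ is already settled by Theorem~\ref{thm:outersub}, so I would assume $\Delta\ge 4$ and work with a palette $C$ of size $\lfloor\tfrac{3}{2}\Delta\rfloor+1$. First I would dispose of the easy reductions exactly as in the proof of Theorem~\ref{thm:outersub}. A pendant edge is removed, the rest is colored by minimality, and the pendant edge is given any color avoiding the at most $\Delta-1$ colors at its neighbor together with the few colors that would create a bichromatic $4$-path; a counting argument leaves a free color since $|C|\ge\Delta+1$. A nontrivial bridge is handled by splitting $G$ at the bridge, coloring the two sides, and permuting colors so the bridge and all its incident edges agree. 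This reduces to the case where $G$ is $2$-connected, i.e.\ an outer (Hamiltonian) cycle $C=v_1\cdots v_k$ together with a set of noncrossing chords.

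For the $2$-connected case I would exploit the weak dual: the inner faces of $G$ form a tree under chord-adjacency, and a leaf of this tree is an \emph{ear} --- a face bounded by a single chord $uv$ and a path $u=w_0,w_1,\ldots,w_t=v$ of $C$. Because the ear is a single face with empty interior, every internal vertex $w_i$ has degree exactly $2$ in $G$, while the endpoints $u,v$ may have arbitrarily large degree. Deleting $w_1,\ldots,w_{t-1}$ together with the two outer edges $uw_1$ and $w_{t-1}v$ yields an outerplanar graph $G'$ in which the chord $uv$ survives as an ordinary edge, and crucially $\deg_{G'}(u)=\deg_G(u)-1$ and $\deg_{G'}(v)=\deg_G(v)-1$, so $\Delta(G')\le\Delta$ and the inductive bound applies. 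The remaining task --- the heart of the argument --- is to put the ear back: I must color the path edges $uw_1,w_1w_2,\ldots,w_{t-1}v$ so that the result stays proper and bichromatic-$4$-path-free. Since each $w_i$ has degree $2$, this amounts to a star coloring of a path whose endpoints carry the forbidden sets $C(u)$ and $C(v)$ (each of size at most $\Delta-1$) inherited from $G'$, together with the obstruction from $4$-paths leaving the ear through $u$ or $v$.

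The main obstacle is precisely this extension step under the tight budget, which allows only a single color above the tree bound of Theorem~\ref{thm:trees}. For long ears (large $t$) the internal freedom makes the path coloring routine, but the genuinely hard cases are the short ears --- a triangular face ($t=2$, a lone $2$-vertex $w_1$ between two endpoints $u,v$) and, more generally, ears whose endpoints are both $\Delta$-vertices, which is exactly the configuration that forces the tree bound. There the colors already present at $u$ and $v$ can conspire to block every admissible choice for $uw_1$ and $w_1v$ simultaneously, and it is the cyclic constraint across the ear that costs the extra $+1$. I expect that to make the induction go through one must strengthen the hypothesis: rather than proving the bare bound for $G'$, I would prove that $G'$ admits a star edge coloring in which the chord $uv$ and its neighborhood are in a prescribed ``reserved'' state (for instance, forcing a designated color onto $uv$ and keeping a specified color free at both $u$ and $v$), mirroring the gadget trick used in Lemma~\ref{lem:cactus}, where auxiliary edges $e_i,f_i$ were attached to control the colors seen at a cutvertex.

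Finally, to guarantee that a reducible ear of a usable type always exists, I would combine the weak-dual tree structure with a discharging argument on the inner faces: charge is distributed so that some leaf face is forced to be short or to have endpoints of controllable degree, matching one of the configurations proved reducible above. Verifying that the discharging rules cover every outerplanar graph, and that each surviving configuration really can be extended within $\lfloor\tfrac{3}{2}\Delta\rfloor+1$ colors, is where the bulk of the casework --- and the real difficulty --- lies. This is almost certainly why the authors could only establish the weaker additive constants $+12$ (or $+9$) in Theorem~\ref{thm:outer} and left the sharp bound as a conjecture.
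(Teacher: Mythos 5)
First, a point of order: the statement you were asked about is Conjecture~2 of the paper, which the authors explicitly leave open --- they prove only the weaker bound $\lfloor\frac{3}{2}\Delta\rfloor+12$ (Theorem~\ref{thm:outer}) and the subcubic case (Theorem~\ref{thm:outersub}). So there is no proof in the paper to compare against, and your proposal would need to stand entirely on its own. It does not: it is a research program with at least two steps that demonstrably fail as stated, plus the admitted open core. The most serious error is the pendant-edge reduction. You claim a pendant edge $ux$ can be colored greedily because only the at most $\Delta-1$ colors at $u$ plus ``a few'' $4$-path constraints are forbidden, and $|C|\ge\Delta+1$. But the $4$-path constraints are not few: for every neighbor $a$ of $u$ and every edge $ab$ at $a$, the color $\varphi(ab)$ is forbidden for $ux$ whenever $\varphi(ua)$ appears at $b$, which can forbid on the order of $\Delta^2$ color occurrences. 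Indeed, if pendant edges were greedily reducible with $\Delta+O(1)$ colors, then trees --- which are built entirely from pendant edges --- would satisfy $\chis{T}\le\Delta+O(1)$, contradicting the paper's own tightness construction in Theorem~\ref{thm:trees}, where a depth-two spider forces $\lfloor\frac{3}{2}\Delta\rfloor$ colors. Pendant edges are precisely the configurations that drive the $\frac{3}{2}\Delta$ bound; they cannot be waved away.

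The bridge reduction has a similar quantitative failure. In Theorem~\ref{thm:outersub} the authors make the bridge and all $2(\Delta-1)\le 4$ edges incident to it distinctly colored, which needs $2\Delta-1=5$ colors and exactly matches the palette. For $\Delta\ge 5$ you have $\lfloor\frac{3}{2}\Delta\rfloor+1<2\Delta-1$, so the permutation trick cannot guarantee all edges incident to the bridge get distinct colors, and you would need a genuinely finer condition on which coincidences across the bridge are harmless. Finally, you are candid that the heart of the matter --- extending the coloring over a short ear (e.g.\ a triangular face between two $\Delta$-vertices) within the single extra color, via a strengthened induction hypothesis and a discharging argument that are never formulated --- remains unverified. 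That candor is appropriate, since the statement is an open conjecture, but it means the proposal is a sketch of a plausible attack (ear decomposition via the weak dual is a reasonable starting point, and the reserved-state strengthening echoes the gadget in Lemma~\ref{lem:cactus}), not a proof, and two of its ``routine'' reductions are in fact false or insufficient at the stated palette size.
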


For graphs with maximum degree $\Delta=2$, i.e. for paths and cycles, there exist star edge colorings with at most $3$ colors except 
for $C_5$ which requires $4$ colors. In case of subcubic outerplanar graphs the conjecture is confirmed by Theorem~\ref{thm:outersub}.

\bibliographystyle{acm}
{\small
	\bibliography{MainBase}
}

\end{document}